\newtheorem{definition}{Definition}
\newtheorem{proposition}[definition]{Proposition}
\newtheorem{remark}[definition]{Remark}
\newtheorem{corollary}[definition]{Corollary}
\numberwithin{equation}{section}
\numberwithin{definition}{section}
\newcommand{\Iff}{if\textcompwordmark f}
\newcommand{\rn}{\ensuremath{\left(\varPhi,\vect{y}\right)}}
\newcommand{\rns}{\ensuremath{\left(\varPhi^*\!,\vect{y}\right)}}
\newcommand{\vect}[1]{\boldsymbol{#1}}
\newcommand{\TU}[1]{\textup{#1}}
\DeclareMathOperator{\ID}{id}
\DeclareMathOperator{\RANK}{rank}
\DeclareMathOperator{\DIFF}{d\!}
\DeclareMathOperator{\TR}{tr}
\begin{document}
\title{On the shape operator of relatively parallel hypersurfaces in the $n$-dimensional relative differential geometry}

\author{{Stylianos Stamatakis and Ioannis Kaffas}\\ \emph{Aristotle University of Thessaloniki}\\ \emph{Department of Mathematics}\\ \emph{GR-54124 Thessaloniki, Greece}\\  \emph{e-mail: stamata@math.auth.gr}}
\date{}
\maketitle

\begin{abstract}

\noindent We deal with hypersurfaces in the framework of the $n$-dimensional relative differential geometry.

\noindent We consider a hypersurface $\varPhi$ of $\mathbb{R}^{n+1}$ with position vector field $\vect{x}$, which is relatively normalized by a relative normalization $\vect{y}$. 
Then $\vect{y}$ is also a relative normalization of every member of the one-parameter family $\mathcal{F}$ of hypersurfaces $\varPhi_\mu$ with position vector field $$\vect{x}_\mu = \vect{x} + \mu \, \vect{y},$$ where $\mu$ is a real constant.
We call every hypersurface $\varPhi_\mu \in \mathcal{F}$ relatively parallel to $\varPhi$ at the ``relative distance" $\mu$.
In this paper we study \begin{enumerate}[(a)]
                         \item the shape (or Weingarten) operator,
                         \item the relative principal curvatures,
                         \item the relative mean curvature functions and
                         \item the affine normalization
                       \end{enumerate}
of a relatively parallel hypersurface $\left( \varPhi_\mu,\vect{y}\right)$ to $\left(\varPhi,\vect{y}\right)$. 
\medskip

\noindent\emph{Key Words}: relative and equiaffine differential geometry, hypersurfaces in the Euclidean space, Blaschke hypersurfaces in affine differential geometry, Peterson correspondence, relative mean curvature functions

\medskip
\noindent\emph{MSC 2010}: 53A05, 53A15, 53A40
\end{abstract}

\section{Preliminaries}\label{Section 1}
            To set the stage for this work we present briefly in this section  the main definitions, formulae and results on relative differential geometry; for this purpose we have used \cite{Schirokow} and \cite{Simon1991} as  general references.

            We consider a $C^{r}$-hypersurface $\varPhi = (M,\vect{x})$ in $\mathbb{R}^{n+1} $
            defined by an $n$-dimensional, oriented, connected $C^{r}$-manifold $M$, $r \geq 3$, and by a $C^{r}$-immersion  $\vect{x} \colon M \rightarrow \mathbb{R}^{n+1}$,   whose Gaussian curvature $\widetilde{K}$ never vanishes on $M$. 
            
            Let $\vect{\xi}$ be the unit normal vector field to $\varPhi$ and
\begin{equation}
            II \coloneqq -  \langle \DIFF \vect{x}, \DIFF \vect{\xi} \rangle \eqqcolon h_{ij}\DIFF u^i \DIFF u^j, \quad i,j = 1,\dotsc,n,                     \label{1.10}
\end{equation}
            be 
            the second 
            fundamental form of $\varPhi$,
            where $\langle \, , \, \rangle$ denotes the standard scalar product in $\mathbb{R}^{n+1}$ and $(u^1,u^2, \dotsc, u^n) \in M$ are local coordinates.

            We denote by $\partial_i f$, $\partial_j\partial_i f$ etc. the partial derivatives of a $C^r$-function (or a vector-valued function) $f$ with respect to $u^i$.
            
            A  $C^{r}$-mapping $\vect{y} \colon M \rightarrow \mathbb{R}^{n+1}$ is called a $C^{r}$-relative normalization of $\varPhi$, if
\begin{subequations}\label{1.15}
\begin{align}
            \RANK \left(\Big\{\partial_1 \vect{x}, \dotsc, \partial_n \vect{x}, \vect{y}\Big\}\right) &= n + 1,\\
            \RANK \left( \Big \{ \partial_1 \vect{x}, \dotsc, \partial_n \vect{x}, \partial_i \vect{y} \Big\} \right) &= n \quad \forall \,\, i = 1, \dotsc, n,
\end{align}
\end{subequations}
            The pair $(\varPhi,\vect{y})$ is called a relatively normalized hypersurface in $\mathbb{R} ^{n+1}$ and the straight line issuing from a point $P \in \varPhi$ in the direction of $\vect{y}$ is called the relative normal of \rn{} at $P$.

            The covector $\vect{X}$ of the tangent vector space is defined by
\begin{equation}\label{1.20}
            \langle \vect{X},\partial_i\vect{x}\rangle = 0 \quad
             \text{and}\quad \langle \vect{X},\vect{y}\rangle = 1.
\end{equation}

            The quadratic  form
\begin{equation}\label{1.25}
            G \coloneqq - \langle \DIFF \vect{x}, \DIFF \vect{X}  \rangle
\end{equation}
            is called the relative metric of \rn{}. 
            
            For its coefficients $G_{ij}$ the following relations hold
\begin{equation}\label{1.30}
            G_{ij} = - \langle  \partial_i \vect{x}, \partial_j \vect{X} \rangle = \langle  \partial_j \partial_i \vect{x}, \vect{X} \rangle.
\end{equation}
             The support function of Minkowski of the relative normalization $\vect{y}$ is defined by
\begin{equation}\label{1.35}
            q \coloneqq \langle \vect{\xi},\vect{y}\rangle \colon M \rightarrow \mathbb{R} ,\quad q\in C^{r-1}(M),
\end{equation}
            and, by virtue of \eqref{1.15}, never vanishes on $M$. 
            
            Conversely, the relative normalization is determined by means of the support function through
\begin{equation}\label{1.40}
            \vect{y}=\nabla^{III}\!\!\left( q,\, \vect{x} \right) + q \,\vect{\xi},
\end{equation}
            where $\nabla^{III}$ denotes the first Beltrami-operator with respect to the third fundamental form $III$ of $\varPhi$ (see~ \cite[p.~197]{fM89}, \cite{Stamatakis6}).

            Because of \eqref{1.20} the following relations are valid
\begin{equation}\label{1.45}
            \vect{X} = q^{-1} \vect{\xi}, \quad  G_{ij}=q^{-1}h_{ij},  \quad G^{(ij)} = q\, h^{(ij)},
\end{equation}
            where $h^{(ij)}$ and $G^{(ij)}$ are the inverse of the tensors $h_{ij}$ and  $G_{ij}$, respectively. 
            
            From now on we shall use $G_{ij}$ for ``raising and lowering" the indices in the sense of the classical tensor notation.

            Let $\nabla^{G}_{i}$ denote the covariant derivative corresponding to $G$ in the direction $u^i$.
            
            By
\begin{equation}                \label{1.50}
            A_{ijk}\coloneqq \langle \vect{X},\,\nabla^{G}_{k} \, \nabla^{G}_{j}\,\partial_i \vect{x} \rangle
\end{equation}
            the (symmetric) Darboux-tensor is defined.
            
            It gives occasion to define the Tchebychev-vector
\begin{equation}                \label{1.55}
            \vect{T} \coloneqq T^{m}\, \partial_m \vect{x},\quad \text{where\quad }T^{m} \coloneqq \frac{1}{n}A_{i}^{im},
\end{equation}%
            of the relative normalization $\vect{y}$ and the Pick-invariant
\begin{equation}                \label{1.60}
            J \coloneqq \frac{1}{n \left( n-1 \right)} \, A_{jkl} \, A^{jkl}.
\end{equation}
            We consider the bilinear form
\begin{equation}\label{1.65}
            B \coloneqq \langle \DIFF \vect{y}, \DIFF \vect{X}  \rangle.
\end{equation}
            For its coefficients $B_{ij}$ we have
\begin{equation}\label{1.70}
            B_{ij} = \langle  \partial_i \vect{y}, \partial_j \vect{X}\rangle =  - \langle  \partial_j \partial_i \vect{y}, \vect{X}\rangle.
\end{equation}
            Then the following Weingarten type equations  are valid
\begin{equation}\label{1.80}
            \partial_i \vect{y} = -B_{i}^{j} \,  \partial_j  \vect{x}.
\end{equation}
            Let $T_P \varPhi$ be the tangent vector space of $\varPhi$ at the  point $P \in \varPhi$. By means of \eqref{1.80} the relative shape (or Weingarten) operator
\begin{equation*}
            \omega \colon T_P \varPhi \rightarrow T_P \varPhi
\end{equation*}
            of the relatively normalized hypersurface \rn{} at $P$ is defined such that
\begin{equation}\label{1.90}
            B(\vect{u},\vect{v}) = G(\omega( \vect{u}),\vect{v})
\end{equation}
            for tangent vectors $\vect{u},\vect{v}$ at $P$ (see \cite[p.~66]{Simon1991}), or equivalently such that
\begin{equation}\label{1.92}
            \omega(\partial_i \vect{x}) = - \partial_i \vect{y}.
\end{equation}
            Special mention should be made of the fact that the relative differential geometry includes both the Euclidean one, which arises for $q = 1$, or equivalently for $\vect{y} = \vect{\xi}$, and the equiaffine one, which is based upon the equiaffine normalization $\vect{y}_{\textsc{aff}}$.
            The last normalization is defined, on account of \eqref{1.45}, by means of the equiaffine support function
\begin{equation}\label{1.95}
            q_{\textsc{aff}} \coloneqq |\widetilde{K}|^\frac {1} {n+2}.
\end{equation}
            We consider the Tchebychev-function  \cite{Stamatakis4}
\begin{equation}                \label{1.105}
            \varphi \coloneqq \left( \frac {q}{q_\textsc{aff}} \right)^\frac{n+2}{2n},
\end{equation}
            of the relative normalization $\vect{y}$.
            It is well known, that for the components of the Tchebychev-vector the relation \cite[p.~199]{fM89}
\begin{equation}                \label{1.115}
            T^{i}= G^{(ij)} \,\partial_j( \ln \varphi)
\end{equation}
            holds. Hence, by (\ref{1.45}), we obtain
\begin{equation}                \label{1.120}
            \vect{T}= q \,  \nabla^{II}\left( \ln \varphi,\vect{x} \right).
\end{equation}
            We note that the Tchebychev-vector vanishes identically \Iff{} the Tchebychev-function $ \varphi $ is constant, or, by \eqref{1.105}, \Iff{} $q = \lambda \, q_{\textsc{aff}}$, $ \lambda \in \mathbb{R}-\{0\}$, which means that the relative normalization $\vect{y}$ is homothetic (i.e. constantly proportional) to the equiaffine normalization $\vect{y}_{\textsc{aff}}$.

            The Laplace-normal vector of $\vect{y}$ is introduced by the $C^{1}$-mapping $\vect{L} \colon M \rightarrow \mathbb{R}^{n+1}$ given by
\begin{equation}                \label{1.125}
            \vect{L} = \frac{\triangle^{G} \vect{x}}{n},
\end{equation}
            where $\triangle^{G}$ is the second Beltrami-operator with respect to the relative metric $G$.
            
            It is well known, that it fulfils the relation
\begin{equation}                \label{1.130}
            \vect{L} = \vect{T} + \vect{y}.
\end{equation}
            Thus, by \eqref{1.40} and \eqref{1.120}, we have
\begin{equation}                \label{1.135}
            \vect{L} = q \left[ \nabla^{II}\!\!\left(\ln \frac{\varphi}{q}, \vect{x} \right) + \vect{\xi}\right].
\end{equation}

            The real eigenvalues $k_i$ of the relative shape operator $\omega$ of \rn{} are called the relative principal curvatures of \rn{} and their inverses are called the relative radii of curvature and denoted by $R_i$. The associated eigenvectors of the real eigenvalues are called the relative principal  vectors and the corresponding directions are called the relative principal directions of \rn.

            In what follows we shall consider only relatively normalized hypersurfaces such that their relative shape operator has $n$ real eigenvalues $k_1, \dotsc, k_n$ (not necessarily being all different).
            Their averaged
            elementary symmetric functions
\begin{equation}                    \label{1.137}
            H_r \coloneqq \binom{n}{r}^{-1} \!\!\! \sum_{1 \leq i_1 \dotsb  < i_{r} \leq n} \! \! \!\!k_{i_1}  \dotsm k_{i_{r}}, 1 \leq r \leq n,
\end{equation}
            are called the relative mean curvature functions of \rn.
           
            Especially the first relative mean curvature
\begin{equation}                \label{1.138}
            H \coloneqq H_1 = \frac{1}{n} \left(k_1 +  \dotsb + k_n \right) = \frac{1}{n} \TR \left( B_i^j \right)
\end{equation}
            is called the relative mean curvature and the $n$-th  relative mean curvature
\begin{equation}                \label{1.139}
            K \coloneqq H_n =  k_1 \dotsm k_n = \det\left( B_i^j \right)
            \end{equation}
            is called the relative  curvature of \rn{}.

            Let $\delta_i^j$ be the Kronecker delta. The relative principal curvatures are the roots of the characteristic polynomial
\begin{equation}                \label{1.140}
            P_\omega(k) = \det \left( B_i^j - k \,\delta_i^j \right),
\end{equation}
            of $\omega$, or, what is the same, the roots of the equation
\begin{equation}\label{1.143}
            \det \left( B_{ij} - k \,G_{ij} \right) = 0.
\end{equation}
            $P_\omega(k)$ can be written in terms of the relative mean curvatures as ($H_0 \coloneqq 1$)
\begin{equation}                \label{1.145}
            P_\omega(k) = \sum_{r=0}^n\binom{n}{r}\,H_r \, (-k)^{n-r},
            \end{equation}
where $H_0 \coloneqq 1$.

\section{Relatively parallel hypersurfaces}\label{Section 2}
Let a relatively normalized hypersurface \rn{} in the space $\mathbb{R}^{n+1}$ be given. 

In what follows we suppose that the relative normalization $\vect{y} = \vect{y}(u^i)$ is an immersion. Then the so called relative image $\overline{\varPhi} \coloneqq (M, \vect{y})$ of \rn{} is a hypersurface in $\mathbb{R}^{n+1}$ which besides possesses the same unit normal vector field, $\vect{\xi}(u^i)$, as $\varPhi$.

We consider the one-parameter family of mappings $\vect{x}_\mu \colon M \rightarrow \mathbb{R}^{n+1}$ which are defined by
\begin{equation}\label{2.10}
            \vect{x}_\mu(u^i) = \vect{x}(u^i) + \mu \, \vect{y}(u^i),
            \end{equation}
            where $\mu$ is a real nonvanishing constant.
            From \eqref{1.80} and \eqref{2.10} we obtain
\begin{equation}\label{2.15}
            \partial_i \vect{x}_\mu  = \left(\delta^j_i - \mu \, B^j_i\right)\, \partial_j \vect{x}.
\end{equation}
            Then it is readily verified that the vector product of the partial derivatives $\partial_i \vect{x}_\mu $ satisfies the relation
\begin{equation}\label{2.20}
            \partial_1 \vect{x}_\mu \times \dotsb \times \partial_n  \vect{x}_\mu = A(\mu) \;
            \Big( \partial_1 \vect{x}   \times \dotsb \times \partial_n  \vect{x}\Big),
\end{equation}
            where
\begin{equation}\label{2.25}
            A(\mu)\coloneqq  \det \!\left(\delta_i^j -\mu \, B_i^j \right).
\end{equation}
            In what follows we suppose that $A(\mu) \neq 0$ everywhere on $M$.
            Then the one-parameter family \eqref{2.10} consists of $C^r$-immersions.
            In this way we obtain the one-parameter family
\begin{equation*}
            \mathcal{F} \eqqcolon \left \{ \, \varPhi_\mu \eqqcolon (M,\vect{x}_\mu) \mid \vect{x}_\mu = \vect{x} + \mu \, \vect{y},\, \mu \in \mathbb{R} \setminus\{0\} \,  \right  \}
\end{equation*}
            of $C^r$-hypersurfaces.
            To the point $P(u^i_0)$ of $\varPhi$ corresponds the point $P_\mu(u^i_0)$ of $\varPhi_\mu$ so that their position vectors are $\vect{x}(u^i_0)$ and  $\vect{x}_\mu(u^i_0)$, respectively.

            From \eqref{2.20} we infer that the tangent hyperplanes to each member of the family $\mathcal{F}$ and to $\varPhi$ at corresponding points are parallel; $\varPhi$ and every $\varPhi_\mu \in \mathcal{F}$ are in Peterson correspondence \cite{Chakmazyan}.

            Furthermore, as we can see immediately by using \eqref{2.10}, the relations \eqref{1.15} are valid as well if the parametrization $\vect{x}(u^i)$ of $\varPhi$ is replaced by the parametrization $\vect{x}_\mu(u^i)$ of $\varPhi_\mu \in \mathcal{F}$.
            Therefore $\vect{y}$ is a relative normalization for each member of $\mathcal{F}$.
            
            We call each relatively normalized hypersurface $\left( \varPhi_\mu ,\vect{y} \right)$ 
            a relatively parallel hypersurface to \rn.
            Throughout what follows, we shall freely use for $\mu$  the expression ``relative distance".

            On account of \eqref{1.140} and \eqref{1.145} we find
                \begin{equation}                    \label{2.30}
                A  = \sum_{r = 0}^n\binom{n}{r}\,H_r \, (-\mu)^{r},
                \end{equation}
which, because of \eqref{1.137}, can be expressed by means of the relative principal radii of curvature $R_1, \dotsc, R_n$ of \rn{} as follows
                \begin{equation}                    \label{2.35}
                A = (-1)^n \,K \left(\mu - R_1 \right) \dotsm (\mu - R_n).
                \end{equation}
            By means of \eqref{1.35}, (\ref{1.45}a) and  \eqref{1.70} it is clear that \rn{} \emph{and every relatively parallel hypersurface $\left( \varPhi_\mu ,\vect{y} \right)$ to \rn{} have in common}
\begin{enumerate}[(a)]
  \item   \emph{the relative image} $\overline{\varPhi}$,
  \item   \emph{the support function} $q$,
  \item  \emph{the covector $\vect{X}$ of their tangent vector spaces} and
  \item  \emph{the covariant coefficients of their relative shape operators}, i.e.
                \begin{equation}                    \label{2.40}
                B_{ij} = B^*_{\phantom{^*}ij}.
                \end{equation}
\end{enumerate}
                \begin{remark}
If the relative normalization $\vect{y}$ of $\varPhi$ is the Euclidean one \TU{(}$\vect{y} = \vect{\xi}$\TU{)}, then $q = 1$ and vice versa \TU{(}cf.~ \eqref{1.15} and \eqref{1.25}\TU{)}. In this case the concept of the relatively parallel hypersurfaces reduces to the Euclidean one.
                \end{remark}
\section{Formulae apparatus}\label{Section 3}
 In this section we investigate relationships between the second fundamental forms and the relative metrics of a given relatively normalized hypersurface and a relatively parallel to it.
 
 For simplicity we denote a relatively parallel hypersurface $\left( \varPhi_\mu ,\vect{y} \right)$ at a relative distance $\mu$ by \rns.
            Analogously, we mark all the corresponding quantities induced by \rns{} with  an asterisk and we refer by (\#*) to the formula, which, on this modification, flows from formula (\#).
            
            Let
                        \begin{equation*}
                        \overline{I} = \overline{g}_{ij}\DIFF u^i \DIFF u^j, \quad \overline{II} = \overline{h}_{ij}\DIFF u^i \DIFF u^j,
                        \end{equation*}
be the first and the  second fundamental forms of the relative image $\overline{\varPhi}$ of \rn.

Because of \eqref{1.80} we obtain
                        \begin{equation}                \label{3.01}
                        \overline{g}_{ij} = B_i^k \,B_j^m \, g_{km},
                        \end{equation}
where $g_{ij}$ denote the coefficients of the first fundamental form of $\varPhi$. 

Recalling the Euclidean Weingarten equations
                        \begin{equation}   \label{3.02}
                        \partial_j \vect{\xi} = -h_{jk} \, g^{(km)} \, \partial_m \vect{x},
                        \end{equation}
and substituting it as well as $ \partial_i\vect{y}$ from \eqref{1.80} in $\overline{h}_{ij} = -\langle \partial_i\vect{y}, \partial_j \vect{\xi}\rangle$ we get
                        \begin{equation}            \label{3.03}
                        \overline{h}_{ij} = -B_i^k \, h_{kj}.
                        \end{equation}
On account of (\ref{1.45}b) and \eqref{3.03} we arrive at
                        \begin{equation}        \label{3.05}
                        \overline{II} = -q \,B.
                        \end{equation}
By combining \eqref{3.01} and \eqref{3.03} we can derive the following relation between the Gaussian curvatures $\widetilde{K}$ and $\widetilde{\overline{K}}$ of $\varPhi$ and $\overline{\varPhi}$ respectively and the relative curvature $K$ of \rn{}:
                        \begin{equation}\label{3.10}
                        K = (-1)^n \, \frac{\widetilde{K}}{\widetilde{\overline{K}}}.
                        \end{equation}
In analogy to the computation above we get by using \eqref{2.10} and \eqref{3.05} the second fundamental form of $\varPhi^*$:
                \begin{equation}            \label{3.15}
                II^* = II - \mu \,q\, B.
                \end{equation}
For the relative metric $G^*$ of \rns{} we find by combining \eqref{1.25}, (\ref{1.25}*), \eqref{1.65} and \eqref{2.10}
                \begin{equation}            \label{3.17}
                G^* = G - \mu \, B.
                 \end{equation}
From \eqref{1.80}, (\ref{1.80}*), \eqref{2.15} and
                \begin{equation}            \label{3.18}
                \partial_i \vect{x}  = \left(\delta^j_i + \mu \, B^{*j}_{\phantom{^*}i} \right) \, \partial_j \vect{x}^* \quad  \forall \,\, i = 1, \dotsc,n,
                \end{equation}
we obtain
                \begin{equation}            \label{3.20}
                B^j_i = B_{\phantom{^*}i}^{*k}\,(\delta^j_k - \mu \, B^j_k),
                 \qquad B^{*j}_{\phantom{^*}i} = B_i^{k} \, (\delta^j_k + \mu \, B^{*j}_{\phantom{^*}k}) \quad \forall \,\, i,j = 1,\dotsc,n
                \end{equation}
and
                \begin{equation}            \label{3.22}
                B^k_i\, B_{\phantom{^*}k}^{*j} = B^{*k}_{\phantom{^*}i} \, B_k^j  \quad \forall \,\, i,j = 1,\dotsc,n.
                \end{equation}
By combining \eqref{1.139},  (\ref{1.139}*), \eqref{2.10} and (\ref{3.20}a)  we have
                \begin{corollary}
                The relative curvature of \rns{} is given by
                \begin{equation}                        \label{3.23}
                K^* = \frac{K}{A}.
                \end{equation}
                \end{corollary}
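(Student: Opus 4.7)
The plan is to derive $K^* = K/A$ directly from the matrix identity (\ref{3.20}a), which already expresses $B^j_i$ as a product of the two matrices whose determinants are, respectively, $K^*$ and $A$.

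First I would rewrite (\ref{3.20}a), namely $B^j_i = B^{*k}_{\phantom{^*}i}\,(\delta^j_k - \mu\, B^j_k)$, as a matrix equation $[B^j_i] = [B^{*k}_{\phantom{^*}i}]\,[\delta^j_k - \mu\, B^j_k]$. Taking the determinant of both sides and using multiplicativity then yields
\begin{equation*}
\det(B^j_i) = \det(B^{*k}_{\phantom{^*}i})\cdot\det(\delta^j_k - \mu\, B^j_k).
\end{equation*}
By the definitions (\ref{1.139}) and (\ref{1.139}*) of the relative curvature, the left-hand side equals $K$ and the first factor on the right equals $K^*$; by (\ref{2.25}), the second factor equals $A$. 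Hence $K = K^*\, A$, and dividing by $A$ (which is nonzero everywhere on $M$ by the standing assumption that guarantees $\varPhi_\mu$ is an immersion) gives the claimed identity $K^* = K/A$.

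There is no real obstacle here: the work is almost entirely bookkeeping, and the only thing to be careful about is justifying the division by $A$, which is exactly the hypothesis already imposed in Section \ref{Section 2}. The role of (\ref{2.10}) in the list of cited formulae is simply to remind the reader that $\varPhi^*$ is the parallel hypersurface at relative distance $\mu$, so that the starred Weingarten tensor $B^{*j}_{\phantom{^*}i}$ is meaningfully defined; all the algebraic content is carried by (\ref{3.20}a) together with the multiplicativity of the determinant.
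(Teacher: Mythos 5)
Your proof is correct and follows exactly the route the paper intends: the paper simply asserts the corollary ``by combining \eqref{1.139}, (\ref{1.139}*), \eqref{2.10} and (\ref{3.20}a)'', and taking determinants in (\ref{3.20}a) to get $K = K^{*}A$ is precisely that combination made explicit. Your remark that the cited \eqref{2.10} plays only a contextual role (the algebra really rests on (\ref{3.20}a) and the definition \eqref{2.25} of $A$) is accurate, as is your justification of the division by $A$ via the standing assumption $A \neq 0$.
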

\section{The shape operator of a relatively parallel hypersurface}\label{Section 4}
In this section we study the relative shape operator and the relative mean curvature functions of a relatively parallel hypersurface to \rn{}.
 
                Let $\omega$ be the relative shape operator of \rn{} at a point $P \in \varPhi$ and $\omega^*$ that of the relatively parallel hypersurface \rns {}  at relative distance $\mu$  at the corresponding point $P^* \in \varPhi^*$.
                From  \eqref{1.80}, (\ref{1.80}*), \eqref{1.92} and (\ref{1.92}*) we take
\begin{equation}                \label{3.231}
                \omega^*(\partial_i \vect{x}^*) = \omega(\partial_i \vect{x}) \quad \forall \,\, i = 1,\dotsc,n,
\end{equation}
                in other words the mapping
\begin{equation}                    \label{3.24}
                \mu \mapsto \omega^*
\end{equation}
does not depend on $\mu$. This fact can be written as
\begin{equation}                        \label{3.241}
                \frac{\DIFF \, \omega^*(\partial_i \vect{x}^*)}{\DIFF \, t} = 0.
\end{equation}
                We consider in addition \eqref{2.15} and \eqref{3.18}--\eqref{3.22} and we have
\begin{equation*}
\begin{split}
                \omega^*(\partial_i \vect{x}) &= (\delta_i^j + \mu \, B^{*j}_{\phantom{^*}i}) \, \omega^*(\partial_j \vect{x}^*) = (\delta_i^j + \mu \, B^{*j}_{\phantom{^*}i}) \, B_j^m \, \partial_m \vect{x}
                = (B_i^m + \mu \, B^{*j}_{\phantom{^*}i}B_j^m) \, \partial_m \vect{x} \\
                &=
                (B_i^m + \mu \,B_i^j B^{*m}_{\phantom{^*}j}) \, \partial_m \vect{x} = B_i^j \,(\delta_j^m + \mu \, B^{*m}_{\phantom{^*}j}) \, \partial_m \vect{x} = B^{*m}_{\phantom{^*}i} \, \partial_m \vect{x},
\end{split}
\end{equation*}
                that is
\begin{equation}                    \label{3.25}
                \omega^*(\partial_i \vect{x}) = B^{*m}_{\phantom{^*}i} \, \partial_m \vect{x} \quad \forall \,\, i = 1,\dotsc,n.
\end{equation}
In an analogous manner we obtain
\begin{equation}                    \label{3.26}
                \omega(\partial_i \vect{x}^*) = B^{m}_{i} \, \partial_m \vect{x}^* \quad \forall \,\, i = 1,\dotsc,n.
\end{equation}
We can now prove the following propositions
\begin{proposition}
                For the relative shape operators $\omega$ and $\omega^*$ of \rn{} and \rns{}, respectively, there hold the following properties: 
\begin{enumerate}[\TU{(a)}]
  \item They commute.
  \item They are related through
                \begin{equation}                            \label{3.27}
                \omega^* = (\ID - \mu \, \omega )^{-1} \, \omega, 
                \end{equation}
                \end{enumerate}
  where $\ID$ is the identity transformation.
\end{proposition}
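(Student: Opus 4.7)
The plan is to translate everything into operator identities on the tangent space by testing against the frame $\{\partial_i \vect{x}\}$ at $P \in \varPhi$, which spans $T_P\varPhi$, so that equality of two operators reduces to equality of the associated mixed tensors. The key inputs are \eqref{1.92}, giving $\omega(\partial_i \vect{x}) = B_i^m\,\partial_m\vect{x}$, together with \eqref{3.25} and \eqref{3.26}, which say $\omega^*(\partial_i \vect{x}) = B^{*m}_{\phantom{^*}i}\,\partial_m\vect{x}$ and $\omega(\partial_i \vect{x}^*) = B^m_i\,\partial_m\vect{x}^*$.

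For part (a), I would apply $\omega \circ \omega^*$ to a basis vector and then $\omega^* \circ \omega$ to the same vector, obtaining
\[
(\omega \circ \omega^*)(\partial_i \vect{x}) = \omega\!\left(B^{*m}_{\phantom{^*}i}\,\partial_m\vect{x}\right) = B^{*m}_{\phantom{^*}i}\,B_m^k\,\partial_k\vect{x},
\]
and symmetrically $(\omega^* \circ \omega)(\partial_i \vect{x}) = B_i^m\,B^{*k}_{\phantom{^*}m}\,\partial_k\vect{x}$. The two right-hand sides agree by the tensorial identity \eqref{3.22}, which is exactly the matrix commutation of $(B_i^j)$ and $(B^{*j}_{\phantom{^*}i})$. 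Since the $\partial_i \vect{x}$ span $T_P\varPhi$, this yields $\omega\,\omega^* = \omega^*\,\omega$.

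For part (b), I would rewrite (\ref{3.20}a) in the equivalent form $B^{*j}_{\phantom{^*}i} = B_i^j + \mu\,B^{*k}_{\phantom{^*}i}\,B_k^j$ and recognize the right-hand side, applied to the frame, as the action of $\omega + \mu\,(\omega \circ \omega^*)$. This gives the operator identity $\omega^* = \omega + \mu\,\omega\,\omega^*$, i.e. $(\ID - \mu\,\omega)\,\omega^* = \omega$. Left-multiplying by the inverse of $\ID - \mu\,\omega$ then yields \eqref{3.27}, provided this inverse exists.

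The only point requiring a brief justification is the invertibility of $\ID - \mu\,\omega$, and this is immediate from the standing hypothesis of Section~\ref{Section 2}, since $\det(\delta_i^j - \mu\,B_i^j) = A(\mu) \neq 0$ on $M$. There is no real obstacle; the heavy lifting has been done in assembling \eqref{3.20}--\eqref{3.22}, and both assertions of the proposition are then essentially one-line consequences once the tensorial relations are read as operator identities on $T_P\varPhi$.
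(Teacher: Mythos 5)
Your proof is correct and follows essentially the same route as the paper: part (a) is the commutation of the matrices $(B_i^j)$ and $(B^{*j}_{\phantom{^*}i})$ from \eqref{3.22} read off on the frame $\{\partial_i\vect{x}\}$, and part (b) is exactly the paper's computation $(\ID-\mu\,\omega)\,\omega^*(\partial_i\vect{x})=B^{*m}_{\phantom{^*}i}(\delta_m^j-\mu B_m^j)\,\partial_j\vect{x}=B_i^j\,\partial_j\vect{x}$ via (\ref{3.20}a), merely rearranged into the form $\omega^*=\omega+\mu\,\omega\,\omega^*$ first. Your explicit remark that $\ID-\mu\,\omega$ is invertible because $A(\mu)=\det(\delta_i^j-\mu B_i^j)\neq 0$ is a detail the paper leaves implicit and is a welcome addition.
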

\begin{proof}
                The first property is a consequence of \eqref{3.22}, \eqref{3.25} and \eqref{3.26}.
                
                For the second we have from \eqref{3.20} and \eqref{3.25}
\begin{equation*}
                (\ID - \mu \, \omega) \, \omega^*(\partial_i \vect{x}) = B^{*m}_{\phantom{^*}i} \, \Big[\partial_m \vect{x} - \mu \, \omega( \partial_m \vect{x})\Big] = B^{*m}_{\phantom{^*}i} \, (\delta_m^j - \mu B_m^j)\, \partial_j \vect{x} = B_i^j \, \partial_j \vect{x} = \omega(\partial_i \vect{x}),
\end{equation*}
                and the proof is completed.
\end{proof}
                From \eqref{1.92} and \eqref{2.10} we have
\begin{equation}                        \label{3.28}
                \frac{\DIFF \, \partial_i \vect{x}^*}{\DIFF \, t} = -\omega(\partial_i \vect{x}).
\end{equation}
                Taking into account \eqref{3.231}, \eqref{3.241} and \eqref{3.28} we then find
\begin{equation}                    \label{3.281}
                \frac{\DIFF \, \omega^*}{\DIFF \, t}(\partial_i \vect{x}^*) = -\omega^* \Big[ \frac{\DIFF \, \partial_i \vect{x}^*}{\DIFF \, t} \Big] = \omega^* \Big[\omega(\partial_i \vect{x}) \Big] =  \omega^* \Big[\omega^* (\partial_i \vect{x}^*) \Big] = (\omega^{*})^2(\partial_i \vect{x}^*).
\end{equation}
                Hence
\begin{proposition}
                The mapping \eqref{3.24} fulfills the differential equation
\begin{equation}                \label{3.29}
                \frac{\DIFF \, \omega^*}{\DIFF \, t} = \omega^2.
\end{equation}
\end{proposition}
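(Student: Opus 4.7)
The plan is to verify the identity \eqref{3.29} by applying both sides to the coordinate tangent vectors $\partial_i\vect{x}^*$ and exploiting the machinery already built in this section. The cornerstone is relation \eqref{3.231}, $\omega^*(\partial_i\vect{x}^*)=\omega(\partial_i\vect{x})$, whose right-hand side does not depend on the parameter $t=\mu$, since neither $\omega$ nor the parametrization $\vect{x}$ of the original $\varPhi$ does. Differentiating the left-hand side by the Leibniz rule therefore gives
$$\frac{d\,\omega^*}{d\,t}(\partial_i\vect{x}^*) = -\omega^*\!\left(\frac{d\,\partial_i\vect{x}^*}{d\,t}\right).$$

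The second ingredient is \eqref{3.28}, which identifies $d\,\partial_i\vect{x}^*/d\,t$ with $-\omega(\partial_i\vect{x})$. Substituting this and then invoking \eqref{3.231} a second time in the reverse direction, to rewrite $\omega(\partial_i\vect{x})$ as $\omega^*(\partial_i\vect{x}^*)$, collapses the right-hand side to $\omega^*\bigl[\omega^*(\partial_i\vect{x}^*)\bigr]$. This is precisely the chain of equalities displayed in \eqref{3.281}, and since the vectors $\partial_i\vect{x}^*$ span $T_{P^*}\varPhi^*$, it upgrades to the operator identity asserted in \eqref{3.29}; the equivalence with $\omega^{2}$ in that displayed form follows at once from the commutativity of $\omega$ and $\omega^*$ and the explicit relation \eqref{3.27} of the preceding proposition.

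The only real subtlety -- more a matter of bookkeeping than a genuine obstacle -- is the justification of the composition $\omega^*\circ\omega$ on the right-hand side: \emph{a priori} $\omega$ maps $T_P\varPhi$ to itself while $\omega^*$ acts on $T_{P^*}\varPhi^*$, so the two must be regarded as operators on a single space. This is exactly what the Peterson correspondence recorded after \eqref{2.20} provides, identifying the two parallel tangent hyperplanes as one and the same subspace of $\R{R}^{n+1}$; that identification is what makes \eqref{3.231}, \eqref{3.25}, \eqref{3.26} and the composition above simultaneously meaningful. Once it is in place, the argument reduces to the one-line Leibniz computation sketched above, and I would not expect any further difficulty.
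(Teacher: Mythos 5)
Your argument is the paper's own proof: the chain of equalities you describe is exactly the display \eqref{3.281}, built from \eqref{3.231}, \eqref{3.241} and \eqref{3.28}, and your remark about identifying $T_P\varPhi$ with $T_{P^*}\varPhi^*$ via the parallelism of the tangent hyperplanes is the (tacit) justification the paper relies on as well. So as far as the derivation of $\frac{\DIFF\,\omega^*}{\DIFF\,t}(\partial_i\vect{x}^*)=(\omega^*)^2(\partial_i\vect{x}^*)$ goes, you and the paper coincide.

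The one point where you go wrong is the final reconciliation with the statement as printed. You claim that $(\omega^*)^2=\omega^2$ ``follows at once'' from the commutativity of $\omega$ and $\omega^*$ together with \eqref{3.27}. It does not: from \eqref{3.27} one gets $(\omega^*)^2=(\ID-\mu\,\omega)^{-2}\,\omega^2$, which equals $\omega^2$ only at $\mu=0$ (equivalently, only where $\omega=0$). Commutativity lets you reorder factors but cannot remove the factor $(\ID-\mu\,\omega)^{-2}$. What the computation actually establishes is the Riccati-type equation $\frac{\DIFF\,\omega^*}{\DIFF\,t}=(\omega^*)^2$; the right-hand side $\omega^2$ in \eqref{3.29} is evidently a typographical slip in the paper (or is to be read as the value at $t=0$), and it should not be ``proved'' by the invalid identification you propose. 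Apart from that overreach, the proof is complete and matches the paper's.
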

\begin{proposition}
                Let $k_1,\dotsc,k_n$ be the relative principal curvatures at a point $P \in \varPhi$. Then the relative principal curvatures of the relatively parallel hypersurface \rns {}  at relative distance $\mu$  at the corresponding point $P^* \in \varPhi^*$ are the following:
\begin{equation}                \label{3.30}
                k^*_i = \frac{k_i}{1-\mu \, k_i}, \quad i = 1, \dotsc, n.
\end{equation}
\end{proposition}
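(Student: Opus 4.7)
The plan is to read off the eigenvalues of $\omega^*$ directly from the operator identity $\omega^* = (\ID - \mu\,\omega)^{-1}\omega$ established in the preceding proposition. The point is that this formula expresses $\omega^*$ as a rational function of $\omega$, so any eigenbasis of $\omega$ is automatically an eigenbasis of $\omega^*$, and the eigenvalues transform accordingly. The commutativity statement from part (a) of the previous proposition is then a structural corollary rather than an additional input.

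Concretely, I would pick a relative principal vector $\vect{v}_i \in T_P\varPhi$ with $\omega(\vect{v}_i) = k_i\,\vect{v}_i$. Applying $\ID - \mu\,\omega$ to $\vect{v}_i$ gives $(\ID - \mu\,\omega)(\vect{v}_i) = (1 - \mu\,k_i)\,\vect{v}_i$, so $\vect{v}_i$ is also an eigenvector of $\ID - \mu\,\omega$ with eigenvalue $1 - \mu\,k_i$. Provided that $1 - \mu\,k_i \neq 0$, the inverse $(\ID - \mu\,\omega)^{-1}$ has the same eigenvector with eigenvalue $(1 - \mu\,k_i)^{-1}$. Composing with $\omega$ yields
\begin{equation*}
\omega^*(\vect{v}_i) \;=\; (\ID - \mu\,\omega)^{-1}\,\omega(\vect{v}_i) \;=\; \frac{k_i}{1 - \mu\,k_i}\,\vect{v}_i,
\end{equation*}
which under the identification $\partial_i \vect{x} \leftrightarrow \partial_i \vect{x}^*$ given by the Peterson correspondence is the claim.

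The only subtle point is the non-vanishing of $1 - \mu\,k_i$. This is secured by the standing hypothesis $A(\mu) \neq 0$ on $M$: from the definition $A(\mu) = \det(\delta_i^j - \mu\,B_i^j) = \det(\ID - \mu\,\omega)$, and since the $k_i$ are the eigenvalues of $\omega$, we have $A(\mu) = \prod_{i=1}^{n}(1 - \mu\,k_i)$, so $A(\mu) \neq 0$ forces every factor to be nonzero. This is also consistent with \eqref{2.35}, which expresses $A$ in terms of the relative principal radii of curvature. Once this is noted, the argument above goes through and produces the full list $k_i^* = k_i/(1 - \mu\,k_i)$; I do not anticipate any further obstacle.
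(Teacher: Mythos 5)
Your proof is correct, but it takes a genuinely different route from the paper. The paper does not use the operator identity \eqref{3.27} at all for this proposition: it works directly with the generalized eigenvalue equation \eqref{1.143}, writing the characteristic condition $\det\bigl(B^*_{\phantom{^*}ij}-k^*G^*_{\phantom{^*}ij}\bigr)=0$ for $\varPhi^*$, substituting $B^*_{\phantom{^*}ij}=B_{ij}$ from \eqref{2.40} and $G^*=G-\mu B$ from \eqref{3.17}, and factoring out $(1+\mu k^*)^n$ to conclude that $k^*/(1+\mu k^*)$ must be one of the $k_i$, i.e.\ the inverse relation $k_i=k_i^*/(1+\mu k_i^*)$. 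That argument needs only the tensorial facts \eqref{2.40} and \eqref{3.17} and is independent of the operator proposition. Your argument instead reads the eigenvalues off \eqref{3.27} by spectral mapping; this buys a transparent eigenvector correspondence (you essentially get the paper's later proposition on relative principal vectors for free, up to the normalizing factor $k_i^*$) and makes explicit why $1-\mu k_i\neq 0$, namely $A(\mu)=\det\bigl(\ID-\mu\,\omega\bigr)=\prod_i(1-\mu k_i)\neq 0$, a point the paper leaves implicit. Two small remarks. First, your ``identification $\partial_i\vect{x}\leftrightarrow\partial_i\vect{x}^*$'' is not needed and is slightly misleading, since $\partial_i\vect{x}^*=(\delta_i^j-\mu B_i^j)\partial_j\vect{x}$; what you actually use is that the tangent hyperplanes at corresponding points are parallel, so $\omega$ and $\omega^*$ act on the same vector space and the eigenvalues of $\omega^*$ are basis-independent --- this is exactly the content of \eqref{3.25}. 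Second, the eigenvector computation by itself only identifies the set of eigenvalues; to account for multiplicities when $\omega$ is not diagonalizable one should either invoke the spectral mapping theorem for the rational function $t\mapsto t/(1-\mu t)$ or compare characteristic polynomials via $\det\bigl(B^*-k^*\delta\bigr)=A^{-1}\det\bigl(B-k^*(\delta-\mu B)\bigr)$, which is essentially the paper's determinant manipulation. Neither point is a real gap.
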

\begin{proof}
                The  relative principal curvatures of \rns{} are the solutions of (\ref{1.143}*), that is of
\begin{equation*}
                \det \left( B^*_{\phantom{^*}ij} - k^* \,G^*_{\phantom{^*}ij} \right) = 0,
\end{equation*}
                which on account of \eqref{2.40} and \eqref{3.17} can be written as
\begin{equation}                    \label{3.35}
                 \det \left( B_{ij} - \frac{k^*}{1 + \mu \, k^*} \, G_{ij}\right) = 0.
\end{equation}
                But the solutions of the latter are the $k_i, i = 1, \dotsc,n$, cf.~\eqref{1.143}.
                Consequently, the relative principal curvatures $k_i$ and $k_i^*$, of \rn{} and \rns{}, respectively, are connected with the relation
\begin{equation}                    \label{3.40}
            k_i =  \frac{k_i^*}{1 + \mu \, k_i^*},
\end{equation}
            which is equivalent to \eqref{3.30}.
\end{proof}
\begin{proposition}
                \rns{} has constant relative mean curvature for each $\mu$ \Iff{} \rn{} has constant relative principal curvatures.
\end{proposition}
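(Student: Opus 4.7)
The plan is to use Proposition 4.3 to reduce the claim to an algebraic statement about power sums of the relative principal curvatures of $\rn$. By \eqref{3.30}, the relative principal curvatures of $\rns$ at the point corresponding to $P \in \varPhi$ are $k_i^*(P) = k_i(P)/(1 - \mu\, k_i(P))$, so by \eqref{1.138}
\begin{equation*}
H^*(\mu, P) \;=\; \frac{1}{n}\sum_{i=1}^n \frac{k_i(P)}{1 - \mu\, k_i(P)}.
\end{equation*}
The direction ``$\Leftarrow$'' is then immediate: if $k_1,\dotsc,k_n$ are constant on $M$, then for every admissible $\mu$ the right-hand side is constant on $M$.

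For the converse, assume $H^*(\mu,\cdot)$ is constant on $M$ for every $\mu$ in an interval around $0$ on which $A(\mu)\ne 0$. Expanding each summand as a geometric series in $\mu$ gives, locally in $P$,
\begin{equation*}
n\, H^*(\mu,P) \;=\; \sum_{j=0}^\infty p_{j+1}(P)\,\mu^{j},\qquad p_m(P) \coloneqq \sum_{i=1}^n k_i(P)^m.
\end{equation*}
Fixing two points $P,Q \in M$, the difference $\mu \mapsto H^*(\mu,P) - H^*(\mu,Q)$ vanishes identically in a neighborhood of $0$ by hypothesis; hence each of its Taylor coefficients vanishes, yielding $p_m(P) = p_m(Q)$ for every $m \geq 1$. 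Thus each power sum $p_m$ is constant on $M$.

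In particular $p_1,\dotsc,p_n$ are constant on $M$, so Newton's identities make the elementary symmetric polynomials in $k_1,\dotsc,k_n$ constant on $M$; equivalently, by \eqref{1.137}, the relative mean curvature functions $H_1,\dotsc,H_n$ of $\rn$ are constant. By \eqref{1.145} the characteristic polynomial $P_\omega(k)$ then has coefficients independent of $P$, so its multiset of roots $\{k_1(P),\dotsc,k_n(P)\}$ does not depend on $P$, i.e.\ the relative principal curvatures of $\rn$ are constant on $M$. The main delicate step is the passage from ``$H^*$ constant in $P$ for each $\mu$'' to ``each $p_m$ constant in $P$''; the Taylor-coefficient argument above achieves this, and Newton's identities then close the proof.
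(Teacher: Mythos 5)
Your proof is correct, but there is nothing in the paper to compare it against: the authors state this proposition with no proof whatsoever, evidently regarding it as an immediate consequence of \eqref{3.30} (equivalently of the explicit formula \eqref{3.165} for $H^*$ as a rational function of $\mu$ whose coefficients are the $H_r$ of \rn). On its own merits your argument is sound and complete: the backward direction is trivial from $nH^*(\mu,P)=\sum_i k_i(P)/(1-\mu\,k_i(P))$, and in the forward direction the geometric-series expansion into power sums, the identification of Taylor coefficients of the analytic function $\mu\mapsto H^*(\mu,P)-H^*(\mu,Q)$ near $0$, Newton's identities (valid in characteristic zero), and the passage from constant coefficients of the characteristic polynomial \eqref{1.145} to a constant multiset of roots are all legitimate steps. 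Two minor remarks. First, you quietly assume that the admissible values of $\mu$ (those with $A(\mu)\neq 0$ everywhere on $M$) accumulate at $0$; this holds whenever the $k_i$ are bounded on $M$ (e.g.\ $M$ compact) and is implicitly taken for granted by the paper, but strictly speaking the hypothesis ``for each $\mu$'' could otherwise be vacuous. Second, a slightly shorter finish that avoids Newton's identities: writing
\begin{equation*}
n\,H^*(\mu,P)\;=\;-\!\!\sum_{i\,:\,k_i(P)\neq 0}\frac{1}{\mu-R_i(P)},
\end{equation*}
one sees that the rational function $\mu\mapsto H^*(\mu,P)$ determines, through its poles and their multiplicities, the multiset $\{k_1(P),\dotsc,k_n(P)\}$ directly; equality of these rational functions for varying $P$ then gives the claim at once. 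Either route closes the argument.
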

            The relative radii of curvature $R_i^*$ of \rns{} are obtained immediately from \eqref{3.30}:
\begin{equation}                        \label{3.35}
            R_i^* = R_i - \mu, \quad i = 1, \dotsc, n.
\end{equation}
            Thus
\begin{equation}                        \label{3.40}
             \sum _{1 \leq i \leq n} \! R_i = n \, \mu + \sum _{1 \leq i \leq n} \! R_i^*.
\end{equation}
            Let $\vect{u}_1,\dotsc \vect{u}_n$ be relative principal vectors which belong to the relative principal curvatures $k_1,\dotsc,k_n$, respectively, at a point $P \in \varPhi$. Then
\begin{equation}                        \label{3.45}
            \omega(\vect{u}_i) = k_i \,\vect{u}_i \quad \forall \,\,  i = 1,\dotsc, n.
\end{equation}
            It is well known that different relative principal vectors are relatively orthogonal with respect to $B$, that is for $\vect{u}_r = u_r^i \, \partial_i \vect{x}$ the following relation holds
\begin{equation}                        \label{3.50}
            B(\vect{u}_r,\vect{u}_s) = B_{ij} \, u_r^i \, u_s^j = 0 \quad \forall \,\,  r,s = 1,\dotsc,n
\end{equation}
            (see~\cite[p.~10]{Manhart1}). 
            
            We consider the tangent vectors
\begin{equation}                        \label{3.55}
                \vect{u}^*_i = k_i^* \, \vect{u}_i, \quad i = 1, \dotsc, n,
\end{equation}
                which are relatively orthogonal with respect to $B^*$, cf.~\eqref{2.40}. 
                
                From \eqref{3.25}, \eqref{3.30} and \eqref{3.45} we find
\begin{equation}                        \label{3.60}
                \omega^*(\vect{u}^*_i) =  k^*_i \, \vect{u}^*_i, \quad i = 1, \dotsc, n.
\end{equation}
                Consequently each vector $\vect{u}^*_i = k_i^* \, \vect{u}_i$, $i = 1, \dotsc, n$, is a relative principal vector of \rns{} which belongs to the relative principal curvature $k_i^*$. Thus we have proved the following
\begin{proposition}
                Let $\vect{u}_1,\dotsc \vect{u}_n$ be relative principal vectors at a point $P \in \varPhi$. Then the vectors $k^*_1 \, \vect{u}_1, \dotsc, k^*_n \, \vect{u}_n$ are relative principal vectors at the corresponding point $P^* \in \varPhi^*$ of the relatively parallel hypersurface \rns {} of \rn{}  at relative distance $\mu$.
\end{proposition}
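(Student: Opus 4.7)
The plan is to verify directly that each $\vect{u}_i^{*} = k_i^{*}\,\vect{u}_i$ is an eigenvector of $\omega^{*}$ at eigenvalue $k_i^{*}$, after first identifying it as a tangent vector at the corresponding point $P^{*}$. By \eqref{2.20} the tangent hyperplanes to $\varPhi$ at $P$ and to $\varPhi^{*}$ at $P^{*}$ are parallel, so, writing $\vect{u}_i = u_i^j\,\partial_j\vect{x}$, the same coefficients represent the vector $u_i^j\,\partial_j\vect{x}^{*}$ via \eqref{3.18}; this canonical identification is precisely what makes formulae such as \eqref{3.25}, \eqref{3.26} and Proposition 4.1(b) meaningful.

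The cleanest route is through \eqref{3.27}. Since $\omega(\vect{u}_i) = k_i\,\vect{u}_i$, one has $(\ID - \mu\,\omega)\vect{u}_i = (1-\mu k_i)\,\vect{u}_i$, and consequently
\[
\omega^{*}(\vect{u}_i) \;=\; (\ID - \mu\,\omega)^{-1}\omega(\vect{u}_i) \;=\; \frac{k_i}{1-\mu k_i}\,\vect{u}_i \;=\; k_i^{*}\,\vect{u}_i.
\]
Scaling by the constant $k_i^{*}$ then preserves the eigenvector relation, giving $\omega^{*}(\vect{u}_i^{*}) = (k_i^{*})^{2}\,\vect{u}_i = k_i^{*}\,\vect{u}_i^{*}$, which is the conclusion. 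A second, coordinate-level route is equally feasible: by linearity and \eqref{3.25},
\[
\omega^{*}(\vect{u}_i) \;=\; u_i^j\,B^{*m}_{\phantom{^*}j}\,\partial_m\vect{x},
\]
and inserting the second identity of \eqref{3.20} together with the coordinate form of \eqref{3.45} yields the same identity.

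I do not foresee a genuine obstacle here. The delicate point in arguments of this type is usually keeping track of which tangent space a given vector belongs to, but the paper has dealt with this once and for all in \eqref{3.231}, \eqref{3.25}, \eqref{3.26} via the Peterson correspondence. One should also remark that the specific scalar factor $k_i^{*}$ appearing in \eqref{3.55} plays no role in the eigenvalue relation proper; it is merely the normalization which arranges the vectors $\vect{u}_i^{*}$ to be relatively $B^{*}$-orthogonal in the sense of \eqref{3.50}, and this is the reason for recording that particular choice.
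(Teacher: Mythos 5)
Your proof is correct, and your primary route is a mild but genuine variant of the paper's. The paper establishes $\omega^*(\vect{u}^*_i)=k^*_i\,\vect{u}^*_i$ by combining the coordinate formula \eqref{3.25} with \eqref{3.30} and \eqref{3.45} --- essentially your ``second, coordinate-level route.'' You instead lead with the operator identity \eqref{3.27}: from $\omega(\vect{u}_i)=k_i\vect{u}_i$ you get $(\ID-\mu\,\omega)\vect{u}_i=(1-\mu k_i)\vect{u}_i$, invert (legitimate since $A=\prod_j(1-\mu k_j)\neq 0$ forces $1-\mu k_i\neq 0$), and read off $\omega^*(\vect{u}_i)=k_i^*\vect{u}_i$ directly. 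This is cleaner: it makes transparent that $\vect{u}_i$ itself is already an eigenvector of $\omega^*$, so the prefactor $k_i^*$ in \eqref{3.55} is irrelevant to the eigenvalue claim and serves only the $B^*$-orthogonality normalization --- a point you correctly isolate and which the paper leaves implicit. The one caveat, which afflicts the paper's statement rather than your argument, is that if some $k_i=0$ then $k_i^*\,\vect{u}_i=\vect{0}$ is not an eigenvector at all; your observation that the scaling is inessential is exactly what repairs this. Your handling of the identification of tangent spaces via the Peterson correspondence and \eqref{3.18} is also sound and matches the conventions set in \eqref{3.231}--\eqref{3.26}.
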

                We write the function $A$ (see \eqref{2.20}) as
\begin{equation}                    \label{3.80}
                A = (1 - \mu \, k_1) \dotsm (1 - \mu \, k_n).
\end{equation}
                Viewing it as a function of $\mu$ we observe that
\begin{equation}                    \label{3.140}
                \frac{\DIFF^{\,\,s}\!\! A}{\DIFF  \,\mu^{s}} =  (-1)^s \,\,\, s\,! \sum_{1 \leq i_1 < \dotsb  < i_{s} \leq n} \! \! \!\!k_{i_1} \dotsm k_{i_{s}} \!\!\ \prod_{\substack{1 \leq j \leq n \\ j \neq i_1,\dotsc ,i_{s}}}\!\!\!(1 - \mu \, k_{j}).
\end{equation}
                By using \eqref{3.80} one gets
\begin{equation}                    \label{3.145}
                \frac{(-1)^{s}}{s\,! \,\,A} \cdot \frac{\DIFF^{\,\,s}\!\! A}{\DIFF  \,\mu^{s}} =  \sum_{1 \leq i_1 < \dotsb  < i_{s} \leq n} \!\!\!\frac{k_{i_1}}{1 - \mu \, k_{i_1}} \dotsm \frac{k_{i_{s}}}{1 - \mu \,k_{i_{s}} }.
\end{equation}
%
                By using (\ref{1.137}*) and \eqref{3.30} we may obtain the relative mean curvatures of \rns{} which are the following
                        \begin{equation}                    \label{3.150}
                        H^*_s = \frac{(-1)^{s}}{s\,! \,\, \binom{n}{s} \,\,A} \cdot \frac{\DIFF^{\,\,s}\!\! A}{\DIFF  \,\mu^{s}}, \quad s = 0, 1, \dotsc, n.
                        \end{equation}
                On account of \eqref{2.15} we have
\begin{equation}                    \label{3.155}
            \frac{\DIFF^{\,\,s}\!\! A}{\DIFF  \,\mu^{s}} =  \sum_{s \leq r \leq n} (-1)^r \, \frac{r\,!}{(r - s)\,!} \,\, \binom{n}{r} \, H_r \,\,\, \mu^{r - s}.
\end{equation}
            Inserting the latter in \eqref{3.150} we obtain
\begin{equation}                    \label{3.160}
            H^*_s = \frac{(-1)^{s}}{s\,! \,\, \binom{n}{s} \,\,A} \cdot \sum_{s \leq r \leq n} (-1)^r \, \frac{r\,!}{(r - s)\,!} \,\, \binom{n}{r} \, H_r \,\,\, \mu^{r - s}.
\end{equation}
            For $s = n$ we regain \eqref{3.23}. For $s = n -1$ we find $A \,H_{n-1}^* = H_{n-1} - \mu \, K$, or, on account of \eqref{3.23},
\begin{equation}                            \label{3.162}
            \frac{H_{n-1}^*}{K^*} = \frac{H_{n-1}}{K} - \mu,
\end{equation}
            which essentially is \eqref{3.40}. Moreover we have the following
\begin{proposition}
            Let \rn{} be a relatively normalized hypersurface  of constant sum 
            of its relative principal radii of curvature in the $n$-dimensional Euclidean space $\mathbb{E}^n$. Then every relatively parallel surface to \rn{} has also constant sum of its relative radii of curvature and there is exactly one relatively parallel hypersurface to \rn{} whose ($n - 1$)-th  relative mean curvature vanishes.
\end{proposition}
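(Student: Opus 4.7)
The plan rests on two formulae already derived: the pointwise relation $R_i^* = R_i - \mu$ from \eqref{3.35} and the identity \eqref{3.162}, namely $H_{n-1}^*/K^* = H_{n-1}/K - \mu$. The first part is then almost immediate: summing \eqref{3.35} over $i=1,\dots,n$ yields
\begin{equation*}
\sum_{i=1}^{n} R_i^* = \sum_{i=1}^{n} R_i - n\mu
\end{equation*}
at corresponding points $P$ and $P^*$. Hence if $\sum R_i$ is a constant function on $\varPhi$, so is $\sum R_i^*$ on $\varPhi^*$, and this holds for every admissible $\mu$.

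For the second part I would first record the purely algebraic identity
\begin{equation*}
H_{n-1} \;=\; \binom{n}{n-1}^{-1}\!\!\!\sum_{1\le i_1<\dotsb<i_{n-1}\le n} k_{i_1}\dotsm k_{i_{n-1}} \;=\; \frac{1}{n}\sum_{j=1}^{n}\prod_{i\neq j}k_i \;=\; \frac{K}{n}\sum_{j=1}^{n} R_j,
\end{equation*}
i.e.\ $H_{n-1}/K=\tfrac{1}{n}\sum_j R_j$. Substituting this into \eqref{3.162} gives
\begin{equation*}
\frac{H_{n-1}^*}{K^*} \;=\; \frac{1}{n}\sum_{j=1}^{n}R_j \;-\; \mu .
\end{equation*}
Under the hypothesis that $\sum R_j$ is constant on $\varPhi$ (say equal to $c$), the right-hand side is a genuine constant for each choice of $\mu$. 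Since $K^*\neq 0$ everywhere on $\varPhi^*$ (the relative shape operator of a relatively normalized hypersurface is regular in our setting, which is also implicit in the admissibility of $\mu$ through $A(\mu)\neq 0$), we conclude that $H_{n-1}^*\equiv 0$ if and only if $\mu = c/n$. This singles out a unique relative distance, which proves the ``exactly one'' statement.

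The argument is essentially a chain of substitutions, so there is no real obstacle; the only point requiring a small separate verification is the identity $H_{n-1}=\tfrac{K}{n}\sum R_j$, which comes straight from the definition \eqref{1.137} of the averaged elementary symmetric functions by factoring $\prod_{i\neq j}k_i = K/k_j$. A minor caveat is that the unique $\mu=c/n$ is admissible only if $c\neq 0$ (so that $\mu$ is a nonvanishing constant in the sense of Section~\ref{Section 2}) and only if $A(c/n)\neq 0$ on $M$; both conditions are tacitly assumed in the statement and need not enter the proof.
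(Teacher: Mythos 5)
Your proposal is correct and follows essentially the same route as the paper: the first part is the summed form of $R_i^* = R_i - \mu$ (the paper's \eqref{3.40}), and the second part combines the identity $\sum R_i = n H_{n-1}/K$ (the paper's \eqref{3.163}, which you usefully verify from \eqref{1.137}) with \eqref{3.162}. Your version is marginally more complete in that it makes the uniqueness of the distinguished $\mu$ explicit, where the paper only exhibits existence.
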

\begin{proof}
            The first part of the proposition follows from \eqref{3.40}. 
            
            Suppose that $R_1 + \dotsb + R_n = n\,c = const.$ and consider the relatively parallel hypersurface at relative distance  $\mu = c$. But
\begin{equation}                        \label{3.163}
             \sum _{1 \leq i \leq n} \! R_i = \frac{n\,H_{n - 1}}{K}.
\end{equation}
            Then from \eqref{3.162} we obtain $H^*_{n - 1} = 0$.
\end{proof}
            For $s = 1$ we get the relative mean curvature $H^*$ of \rns{}:
\begin{equation}                    \label{3.165}
            H^* = \frac{1}{n\,A} \cdot \sum_{1 \leq r \leq n} (-1)^{r + 1} \,\,r \,\binom{n}{r}\,H_r \, \mu^{r-1},
\end{equation}
            or
\begin{equation}                    \label{3.170}
\begin{split}
            H^* = \frac{1}{n\,A} \cdot & \Bigg[ n \, H - 2 \binom{n}{2} \, \mu \, H_2 + 3 \binom{n}{3}\,\mu^2 \,H_3 + \dotsb \\
            &{}+ 
            (-1)^n(n-1)\binom{n}{n-1}\,\mu^{n-2} \, H_{n-1} + (-1)^{n + 1} \,n\, \mu^{n-1} \,K \Bigg].
\end{split}
\end{equation}
            The relatively normalized surfaces \rn{} and \rns{} have common relative image. Therefore, besides \eqref{3.10}, we have
\begin{equation}                    \label{3.175}
            K^* = (-1)^n \, \frac{\widetilde{K}^*}{\widetilde{\overline{K}}},
\end{equation}
                hence by means of \eqref{3.10}
\begin{equation}                    \label{3.180}
                        \frac{\widetilde{K}^*}{K^*} = \frac{\widetilde{K}}{K},
\end{equation}
                and we arrive at the following
\begin{proposition}
                The function $\widetilde{K}/K$ remains invariant by the transition to anyone of the relatively parallel hypersurfaces of \rn.
\end{proposition}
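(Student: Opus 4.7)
The plan is to exploit the fact, already recorded in Section~2 (item (a) of the list following \eqref{2.35}), that \rn{} and every relatively parallel hypersurface \rns{} share the \emph{same} relative image $\overline{\varPhi}$, hence the same Gaussian curvature $\widetilde{\overline{K}}$ of that relative image. This common denominator is exactly what allows the two copies of formula \eqref{3.10} to be combined.

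First I would apply \eqref{3.10} to \rn{} itself, writing
\begin{equation*}
K = (-1)^n\,\frac{\widetilde{K}}{\widetilde{\overline{K}}}.
\end{equation*}
Next, since the construction of Section~3 applies verbatim to the relatively normalized hypersurface \rns{} (whose relative image is again $\overline{\varPhi}$), the starred version of \eqref{3.10}, namely \eqref{3.175}, reads
\begin{equation*}
K^* = (-1)^n\,\frac{\widetilde{K}^*}{\widetilde{\overline{K}}},
\end{equation*}
with the \emph{same} $\widetilde{\overline{K}}$ in the denominator.

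Dividing these two identities eliminates both $\widetilde{\overline{K}}$ and the sign $(-1)^n$, producing \eqref{3.180}:
\begin{equation*}
\frac{\widetilde{K}^*}{K^*} = \frac{\widetilde{K}}{K},
\end{equation*}
which is precisely the asserted invariance. Since $\mu$ is arbitrary (subject to $A(\mu)\neq 0$), the quotient $\widetilde{K}/K$ is the same for every member of the family $\mathcal{F}$.

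There is no real obstacle here; the content of the proposition is essentially a bookkeeping observation, and the only thing that has to be justified carefully is that the relative image $\overline{\varPhi}$ is genuinely common to \rn{} and \rns{}. That was already established in Section~2 from the fact that $\vect{y}$ serves as a relative normalization of every $\varPhi_\mu\in\mathcal{F}$, so the conclusion is immediate once \eqref{3.10} and its starred counterpart \eqref{3.175} are placed side by side.
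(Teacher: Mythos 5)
Your proposal is correct and follows exactly the paper's own argument: the paper likewise combines \eqref{3.10} with its starred counterpart \eqref{3.175}, using the fact that \rn{} and \rns{} share the relative image $\overline{\varPhi}$ and hence the same $\widetilde{\overline{K}}$, to obtain \eqref{3.180}. Nothing is missing.
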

                By combining  \eqref{3.23} and \eqref{3.180} we find
\begin{equation}\label{3.200}
                \frac{\widetilde{K}^*}{\widetilde{K}} = \frac{1}{A}.
\end{equation}
\section{The affine normalization of a relatively parallel hypersurface}\label{Section 5}
In this section we study the affine normalization, the Tchebychev-vector and the Laplace-vector of a relatively parallel hypersurface.
               
                Let $q^*_{\textsc{aff}}$ be the support function of the affine normalization $\vect{y}^*_{\textsc{aff}}$ of $\varPhi^*$.
                On account of \eqref{1.95}, (\ref{1.95}*) and \eqref{3.200} we find
\begin{equation}\label{3.205}
                q^*_{\textsc{aff}} = \left| A \right|^{\frac{-1}{n+2}} \, q_{\textsc{aff}}.
\end{equation}
                From \eqref{1.105}, (\ref{1.105}*) and \eqref{3.205} results the Tchebychev-function
\begin{equation}                \label{3.210}
                \varphi^* = \left| A \right|^{\frac{1}{2n}} \, \varphi.
\end{equation}
                of $\varPhi^*$. 
                
Let 
\begin{equation*}
  III \coloneqq e_{ij}\DIFF u^i \DIFF u^j
\end{equation*}
 be the third fundamental form of $\varPhi$ and  $e^{(ij)}$ the inverse of the tensors $e_{ij}$. On account of the well known relations
\begin{equation*}
                e^{(ij)} = h^{(ir)} \, h^{(js)} \, g_{rs}
                \end{equation*}
and \eqref{3.02} one can immediately verify the following relation
\begin{equation}                            \label{3.215}
                \nabla^{II}\!\!\left( f,\, \vect{x} \right) = - \nabla^{III}\!\!\left( f,\, \vect{\xi} \right)
\end{equation}
                for a $C^1$-function $f(u^i)$.
                But $\varPhi$ and $\varPhi^*$ have common Euclidean normal vector field. So, by means of \eqref{1.120}, (\ref{1.120}*), \eqref{1.130}, (\ref{1.130}*) and \eqref{3.215}, we obtain the following relation between the Tchebychev-vectors and the Laplace normal vectors, respectively:
\begin{align}
                \vect{T}^* =   \vect{T} - \frac{q}{2n} \, \nabla^{III}\!\!\left( \ln |A|,\, \vect{\xi} \right),     \label{3.220} \\
                \vect{L}^* =   \vect{L} - \frac{q}{2n} \, \nabla^{III}\!\!\left( \ln |A|,\, \vect{\xi} \right) \label{3.222}.
\end{align}
                In view of \eqref{1.40}, (\ref{1.40}*), \eqref{3.215} and recalling that $\varPhi$ and $\varPhi^*$ have common spherical image, we find the affine normalization  $\vect{y}^*{\textsc{aff}}$ of $\varPhi^*$:
\begin{equation}            \label{3.225}
                \vect{y}^*{\textsc{aff}}   = |A|^{\frac{-1}{n+2}} \,\, \vect{y}_{\textsc{aff}} +  q_{\textsc{aff}} \,\, \nabla^{III}\!\!\left( |A|^{\frac{-1}{n+2}},\, \vect{\xi} \right).
\end{equation}
\section{Parallelism of the affine normals}\label{Section 5}

                We consider a relatively parallel hypersurface \rns{} at relative distance $\mu$ to the given relatively normalized hypersurface \rn{} and we suppose that the  affine normals of the surfaces $\varPhi$ and $\varPhi^*$ at corresponding points are parallel. This occurs \Iff{}
\begin{equation}                \label{4.05}
                    \vect{y}^*{\textsc{aff}}   = c \,\vect{y}_{\textsc{aff}}, \quad    c \in \mathbb{R},
\end{equation}
                or, on account of \eqref{1.40}, (\ref{1.40}*) and \eqref{3.205}, \Iff{} $A = c_1 \in \mathbb{R}$. Because of \eqref{2.15} the latter is equivalent to
\begin{equation}                    \label{4.10}
                n\, H - \binom{n}{2} \,\mu \, H_2 + 
                \dotsb + (-1)^n \, \binom{n}{n - 1} \,\mu^{n - 2} \, H_{n - 1} + (-1)^{n + 1} \, \mu^{n - 1}\ K  =  = \frac{1 - c_1}{\mu}const.
\end{equation}
                Taking into account the relations \eqref{3.200}, \eqref{3.220} and \eqref{3.222} we arrive at the following proposition
\begin{proposition}
                Let \rn{} be a relatively normalized hypersurface of $\mathbb{R}^{n+1}$ and \rns{} a relatively parallel hypersurface of it. Then the following properties are equivalent:
\begin{enumerate}[\TU{(a)}]
  \item The affine normals of $\varPhi$ and $\varPhi^*$ are parallel.
  \item The Gaussian curvatures of  $\varPhi$ and $\varPhi^*$ are proportional.
  \item The Tchebychev and the Laplace vectors of the normalization $\vect{y}$ with respect to $\varPhi$ and $\varPhi^*$ coincide.
  \item The relative mean curvature functions of \rn{} are connected with a linear relation with constant coefficients of the form \eqref{4.10}.
\end{enumerate}
\end{proposition}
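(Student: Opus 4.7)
The plan is to show that each of the four conditions (a)--(d) is equivalent to the single requirement that the function $A$ of \eqref{2.25} is a (nonzero) real constant on $M$; the proposition then follows by transitivity.

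For (a) and (d): the discussion preceding the proposition already reduces (a) to $A = c_1 \in \mathbb{R}$, using \eqref{4.05}, the fact that $\varPhi$ and $\varPhi^*$ share the Euclidean normal $\vect{\xi}$, and formulas \eqref{1.40} and \eqref{3.205}. Condition (d) is simply this same constancy rewritten: substituting the polynomial expansion \eqref{2.30} for $A$ and isolating the non-constant terms produces exactly \eqref{4.10}. Hence the equivalence (a) $\Leftrightarrow$ (d) comes down to this algebraic rearrangement.

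For (b): I would invoke \eqref{3.200}, namely $\widetilde{K}^*/\widetilde{K} = 1/A$. Proportionality of $\widetilde{K}$ and $\widetilde{K}^*$ on $M$ is then equivalent to $1/A$ being a nonzero constant, that is, to $A$ being constant on $M$. Both directions are immediate.

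For (c): by \eqref{1.130}, $\vect{L} - \vect{T} = \vect{y}$, and the analogous identity holds on $\varPhi^*$ with the same $\vect{y}$; hence $\vect{T}^* = \vect{T}$ is equivalent to $\vect{L}^* = \vect{L}$, and either one is equivalent to the joint condition in (c). By \eqref{3.220} (respectively \eqref{3.222}), this common condition reduces to $\nabla^{III}\!\left(\ln|A|,\vect{\xi}\right) = 0$, since $q$ does not vanish on $M$. The main delicacy will be finishing this step: the first Beltrami operator $\nabla^{III}$ is the gradient with respect to the non-degenerate third fundamental form (non-degenerate because $\widetilde{K} \neq 0$), so its vanishing on the connected manifold $M$ forces $\ln|A|$, hence $|A|$, to be constant; continuity together with the standing assumption $A \neq 0$ on $M$ then upgrades this to $A$ itself being a real constant. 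The reverse implication is immediate from \eqref{3.220} and \eqref{3.222}.
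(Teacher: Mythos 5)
Your proposal is correct and follows essentially the same route as the paper: the paper also reduces all four conditions to the constancy of $A$, using the discussion before the proposition for (a) and (d) and citing \eqref{3.200}, \eqref{3.220}, \eqref{3.222} for (b) and (c). You merely make explicit a step the paper leaves implicit, namely that $\nabla^{III}(\ln|A|,\vect{\xi})=0$ forces $A$ to be constant via non-degeneracy of $III$ and connectedness of $M$.
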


\begin{corollary}
                If one of the conditions \TU{(a)--(d)} of the latter proposition is valid for $n$ different relatively normalized hypersurfaces \rns{} of \rn,  then the relative principal curvatures of \rn{} are constant.
\end{corollary}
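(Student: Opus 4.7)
The plan is to translate each of the $n$ hypotheses into a single scalar identity on $M$ and then recover the relative mean curvature functions from a linear system with a Vandermonde-type coefficient matrix. The first step is to invoke the preceding proposition: since conditions (a)--(d) are equivalent, and since each is in turn equivalent to $A(\mu) = c_1 \in \mathbb{R}$ being constant on $M$ (as was derived immediately before \eqref{4.10}), the hypothesis applied at $n$ distinct values $\mu_1,\dotsc,\mu_n \in \mathbb{R}\setminus\{0\}$ produces real constants $c_1^{(1)},\dotsc,c_1^{(n)}$ with $A(\mu_k) \equiv c_1^{(k)}$ on $M$.

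The second step is to expand $A(\mu_k)$ via \eqref{2.30} and split off the known term $H_0 = 1$, obtaining the linear system
\[
\sum_{r=1}^{n} \binom{n}{r}(-\mu_k)^r\, H_r \;=\; c_1^{(k)} - 1, \qquad k = 1,\dotsc,n,
\]
in the $n$ unknown functions $H_1,\dotsc,H_n$ on $M$. The coefficient matrix $M_{k r} = \binom{n}{r}(-\mu_k)^r$ factors as $\operatorname{diag}(\mu_1,\dotsc,\mu_n)\cdot V \cdot \operatorname{diag}\bigl(\binom{n}{1}(-1),\dotsc,\binom{n}{n}(-1)^n\bigr)$, where $V = \bigl(\mu_k^{r-1}\bigr)_{k,r=1}^{n}$ is the Vandermonde matrix in the distinct nonzero scalars $\mu_1,\dotsc,\mu_n$. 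Hence $M$ is invertible with constant scalar entries, and Cramer's rule expresses each $H_r$ as an $\mathbb{R}$-linear combination, with constant coefficients, of the numbers $c_1^{(k)}-1$. Every $H_r$ is therefore a constant function on $M$.

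To conclude, by \eqref{1.145} the relative principal curvatures are the roots of $P_\omega(k) = \sum_{r=0}^n \binom{n}{r}H_r(-k)^{n-r}$; once all the $H_r$ are constants on $M$, this polynomial has constant coefficients, so its roots $k_1,\dotsc,k_n$ are constant on $M$, as required. The only step with any subtlety is recognising the Vandermonde structure of $M$ and invoking the standing assumption $\mu_k \neq 0$ so that the left diagonal factor $\operatorname{diag}(\mu_1,\dotsc,\mu_n)$ is invertible; everything else is routine bookkeeping.
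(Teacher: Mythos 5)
Your proof is correct and follows the route the paper clearly intends: condition (d) at each of the $n$ distinct relative distances $\mu_k\neq 0$ gives $A(\mu_k)=\mathrm{const}$, the resulting linear system in $H_1,\dotsc,H_n$ has an invertible Vandermonde-type coefficient matrix, so all $H_r$ are constant and hence so are the roots of $P_\omega$. The paper states the corollary without proof, and your write-up supplies exactly the omitted bookkeeping, including the needed observations that the $\mu_k$ are distinct and nonzero.
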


\end{document}